\newtheorem{df}{Definition}
\newtheorem{thm}{Theorem}
\newtheorem{problem}{Problem}
\newtheorem{conjecture}{Conjecture}
\newtheorem{lem}{Lemma}
\newtheorem{claim}{Claim}
\newtheorem{Constrution}{Construction}
\newenvironment {proof} {\noindent{\em Proof.}}{\hspace*{\fill}$\Box$\par\vspace{4mm}}
\title{\bf On extremal problems on multigraphs }
\author{
	\small  Ran Gu$^1$, %thanks{Corresponding author.}
	Shuaichao Wang$^2$\\
	\small $^1$College of Science, Hohai University,\\
	\small Nanjing, Jiangsu Province 210098,
	P.R. China\\
	\small $^2$Center for Combinatorics and LPMC \\
	\small Nankai University, Tianjin 300071, China \\
	\small Emails: rangu@hhu.edu.cn;Wsc17746316863@163.com
	\\
	\date{}}
\newcommand{\Rmnum}[1]{\expandafter@slowromancap\romannumeral #1@}
\begin{document}
	\maketitle
	\begin{abstract}
		An $(n,s,q)$-graph is an $n$-vertex multigraph in which every $s$-set of vertices spans at most $q$ edges. Erd\H{o}s initiated the study  of maximum number of edges of $(n,s,q)$-graphs, and the extremal problem on multigraphs has been considered since the 1990s.  The problem of determining the maximum product of the edge multiplicities in $(n,s,q)$-graphs was posed by Mubayi and Terry in  2019. Recently,  Day, Falgas-Ravry and Treglown settled a conjecture of Mubayi and Terry on the case $(s,q)=(4, 6a + 3)$ of the problem (for $a \ge 2$), and they gave a general lower
        bound construction for the extremal problem for many pairs $(s, q)$,
        which they conjectured is asymptotically best possible. Their conjecture was confirmed exactly or asymptotically for some specific cases.
      In this paper, we consider the case that $(s,q)=(5,\binom{5}{2}a+4)$ and $d=2$ of their conjecture,  partially solve an open problem raised by Day, Falgas-Ravry and Treglown.  We also show that  the conjecture fails for $n=6$, which indicates for the case  that  $(s,q)=(5,\binom{5}{2}a+4)$ and $d=2$,  $n$ needs to be sufficiently large for the conjecture to hold.
        \\[2mm]

	\noindent\textbf{Keywords:}  Multigraphs; Tur\'an problems; Extremal graphs\\
		[2mm] \textbf{AMS Subject Classification (2020):} 05C35, 05C22
	\end{abstract}
	
	\section{Introduction}
	In 1963, Erd\H{o}s \cite{Erdos1,Erdos2} posed the extremal question on  $ex(n,s,q)$, the maximum number of edges in an $n$-vertex graph in which  every $s$-set of vertices spans at most $q$ edges, where $q$ is an integer satisfying that $0\leq q \leq \binom{n}{2}$. In the 1990s, Bondy and Tuza \cite{BT} and Kuchenbrod \cite{K} raised an analogous extremal problem on multigraphs.
	A multigraph is a pair ($V,w$), where $V$ is a vertex set and $w$ is a function $w:\binom{V}{2} \rightarrow \mathbb{Z}_{\geq0}.$
	\begin{df}
		Given integers $s\geq 2$ and $q\geq 0$, we say a multigraph $G=(V,w)$ is an $(s,q)$-graph if every $s$-set of vertices in $V$ spans at most $q$ edges; i.e. ${\sum}_{xy\in \binom{X}{2}}w(xy)\leq q$ for every $X\in \binom{V}{s}$. An $(n,s,q)$-graph is an $n$-vertex $(s,q)$-graph. We write $\mathcal{F}(n,s,q)$ for the set of all $(n,s,q)$-graphs with vertex set $[n]$ $:=$ $\{1,...,n\}$.
	\end{df}
\par
     Bondy and Tuza \cite{BT}, Kuchenbrod \cite{K} and F\"{u}redi and K\"{u}ndgen \cite{FK} studied  the problem of the maximum of the sum of the edge multiplicities in an ($n,s,q$)-graph. Especially, F\"{u}redi and K\"{u}ndgen \cite{FK} obtained an asymptotically tight upper bound $m\binom{n}{2}+O(n)$ for the maximum   of edges in $(n,s,q)$-graphs, where  $m=m(s,q)$ is an explicit constant. Recently, Mubayi and Terry \cite{MT1,MT2} introduced a version of  multiplicity product of the problem as follows.
     \begin{df}
     	Given a multigraph $G=(V,w)$, we define
     	$$P(G) := \prod_{xy\in \binom{V}{2}}w(xy),$$
     	$$ex_\Pi (n,s,q) := \max\{P(G):G\in \mathcal{F}(n,s,q)\},$$
     	$$ex_\Pi (s,q) := \lim_{n\to +\infty}(ex_\Pi (n,s,q))^{\binom{n}{2}^{-1}}.$$
     \end{df}
 \par
   % Mubayi and Terry \cite{MT1,MT2} attempted to develop the multigraph version of counting theorem,%
     Mubayi and Terry  showed in \cite [Theorem 2.2]{MT1}, that for $q\geq \binom{s}{2}$,
    $$\left|\mathcal{F}(n,s,q-\binom{s}{2})
    \right|=ex_\Pi (s,q)^{\binom{n}{2}+o(n^2)}.$$
   As  Mubayi and Terry pointed out estimating the size of the multigraph family $\mathcal {F}\left(n, s, q -\binom{s}{2}\right)$ is equivalent to the Tur\'{a}n-type
extremal problem of determining $ex_{\Pi}(n, s, q)$. Therefore, Mubayi and Terry raised the general problem of determining $ex_\Pi (n,s,q)$ as below.
	\begin{problem} \cite{MT1,MT2}
		Given positive integers $s\geq 2$ and $q$, determine $ex_\Pi (n,s,q).$
	\end{problem}
\par
      Mubayi and Terry in \cite{MT1} proved that
      $$ex_\Pi (n,4,15)=2^{\gamma n^2+O(n)}$$
      where $\gamma$ is defined by
      $$\gamma:=\frac{\beta ^2}{2}+\beta(1-\beta)\frac{\log3}{\log2}~~\text{where}~~\beta:=\frac{\log3}{2\log3-\log2}.$$ This is the first example of a `fairly natural extremal graph problem' whose asymptotic answer is given by an explicitly defined transcendental number. In response to a question of Alon which asked whether this transcendental behaviour is an isolated case,
       Mubayi and Terry \cite{MT1} made a conjecture on the value of $ex_\Pi (n,4,\binom{4}{2}a+3)$. In \cite{DFT}, Day, Falgas-Ravry and Treglown resolved their conjecture fully,
       consequently providing infinitely many examples on Alon's question.

   Mubayi and Terry \cite{MT2} exactly or asymptotically determined $ex_\Pi(n, s, q)$ for pairs
$(s, q)$ where $a\binom{s}{2}-\frac{s}{2}\le q \le a\binom{s}{2}+ s - 2$ for some $a \in  \mathbb{N}$. Day, Falgas-Ravry and Treglown \cite{DFT} investigated $ex_\Pi(n, s, q)$ for a further range of values of $(s, q)$. In particular, they gave a potential extremal structure related to $ex_\Pi(n, s, q)$.  Their constructions  may be seen as a class of multigraphs analogues of the well known Tur\'{a}n graphs.

\begin{Constrution}\cite{DFT}
	Let $a,r\in \mathbb{N}$ and $d\in \left[0,a-1\right]$. Given $n\in \mathbb{N}$, let $\mathcal{T}_{r,d}(a,n)$ denote the collection of multigraphs $G$ on $\left[n\right]$ for which $V(G)$ can be partitioned into $r$ parts $V_0,...,V_{r-1}$ such that:\\
	(i) all edges in $G[V_0]$ have multiplicity $a-d$;\\
	(ii) for all $i\in [r-1]$, all edges in $G[V_i]$ have multiplicity $a$;\\
	(iii) all other edges of $G$ have multiplicity $a+1$.\\
\newline
	Given $G\in \mathcal{T}_{r,d}(a,n)$, we refer to $\cup_{i=0}^{r-1}V_i$ as the canonical partition of $G$.
\end{Constrution}
\par
    Fig. \ref{dhgraph} is an example of what these graphs look like when $r=4$. We write
    $${\Sigma}_{r,d}(a,n):=\max\left\lbrace  e(G):G\in \mathcal{T}_{r,d}(a,n) \right\rbrace  ,$$
    and
    $$\Pi_{r,d}(a,n):=\max\left\lbrace P(G):G\in \mathcal{T}_{r,d}(a,n)\right\rbrace .$$
\begin{figure}[http]
	\centering
	\scalebox{0.2}{\includegraphics{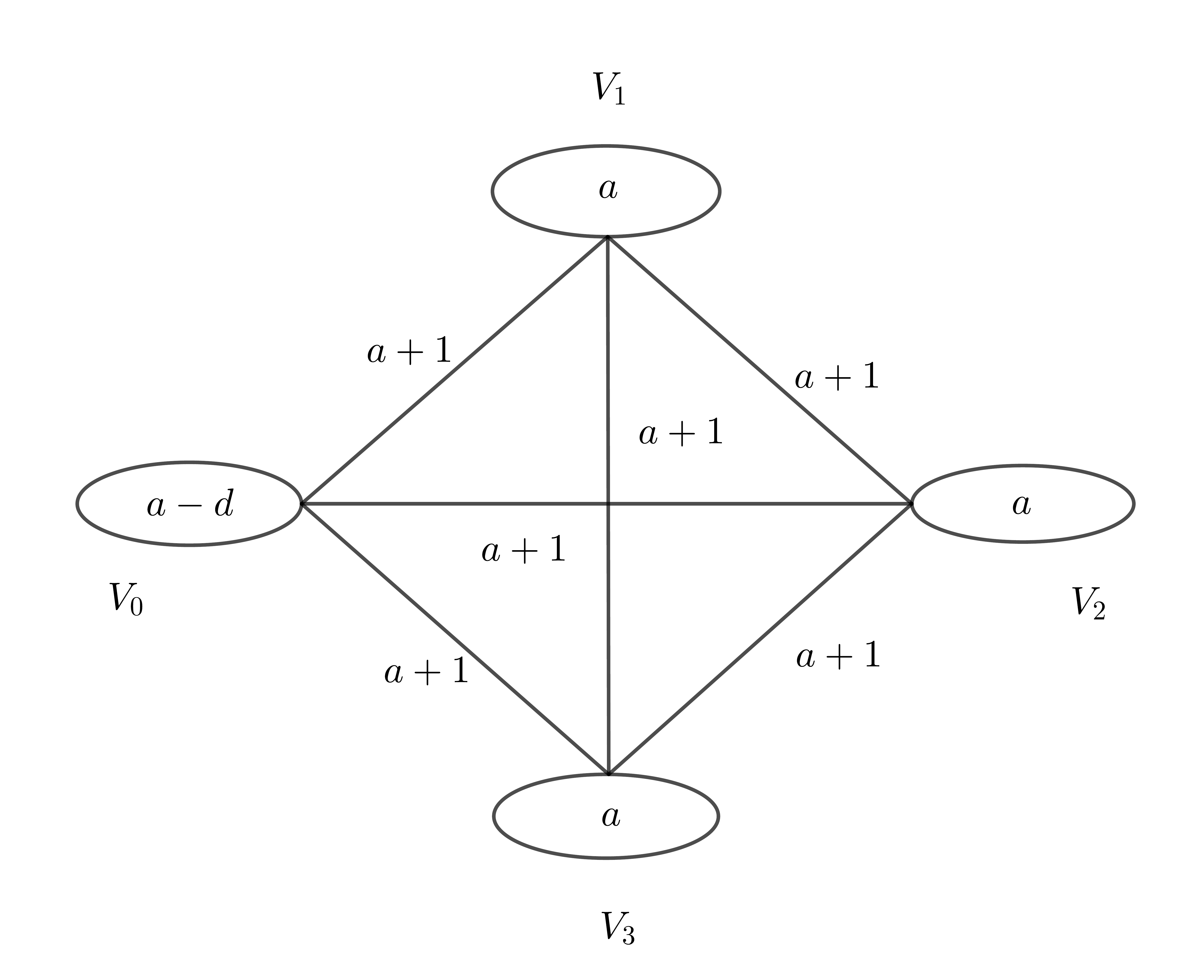}}
	\caption{An example of the structure of graphs in $\mathcal{T}_{4,d}(a,n).$}
	\label{dhgraph}
	
\end{figure}
\par
    In \cite{DFT},  Day, Falgas-Ravry and Treglown raised a new conjecture as below.
	\begin{conjecture}\label{DFTc}
	For all integers $a,r,s,d$ with $a,r\geq 1$, $d\in \left[0,a-1 \right]$,$s\geq(r-1)(d+1)+2$ and all $n$ sufficiently large,
\begin{equation}\label{conj}
 ex_\Pi (n,s,{\Sigma}_{r,d}(a,s))=\Pi_{r,d}(a,n).
\end{equation}
\end{conjecture}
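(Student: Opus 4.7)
The plan is to match the trivial lower bound $ex_\Pi(n,s,\Sigma_{r,d}(a,s)) \ge \Pi_{r,d}(a,n)$ (immediate from exhibiting an optimizer in $\mathcal{T}_{r,d}(a,n)$, since the hypothesis $s\ge (r-1)(d+1)+2$ is exactly what guarantees that some $s$-subset of such a multigraph realises $\Sigma_{r,d}(a,s)$) against a three-stage upper bound proof, structured along the stability / exactness template used by Day, Falgas-Ravry and Treglown in their resolution of the $(s,q)=(4,6a+3)$ case.

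\textbf{Stage 1: Asymptotic upper bound.} Take logarithms and normalise by $\binom{n}{2}$. By a standard averaging argument the normalised log-product is essentially monotone in $n$, so it suffices to control $ex_\Pi(s,\Sigma_{r,d}(a,s))$. I would set up a Lagrangian relaxation whose variables are the densities $\rho_k$ of unordered pairs of multiplicity $k$, together with the density profile across an $r$-partition with part fractions $(\alpha_0,\ldots,\alpha_{r-1})$, subject to the $s$-set constraint. The target is to show that the optimum of $\sum_k \rho_k \log k$ is attained uniquely by the profile of $\mathcal{T}_{r,d}(a,n)$: inside a part of density $\alpha_0$ the multiplicity is $a-d$, inside the other parts it is $a$, and cross-part pairs receive multiplicity $a+1$.

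\textbf{Stage 2: Stability.} Using supersaturation applied to the optimisation of Stage 1, I would show that every $(n,s,\Sigma_{r,d}(a,s))$-graph $G$ with $P(G)\ge (1-\varepsilon)\Pi_{r,d}(a,n)$ admits a vertex partition $V(G)=V_0\cup \cdots \cup V_{r-1}$ agreeing with the canonical partition on all but $o(n^2)$ pairs. This requires strict convexity of the Lagrangian at the asymptotic optimum; this must be checked uniformly in the quadruple $(a,r,s,d)$, noting that the hypothesis $d\le a-1$ keeps the interior multiplicity $a-d$ strictly positive.

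\textbf{Stage 3: From stability to exactness.} Given an exact extremizer $G^*$ and the partition supplied by Stage 2, I would apply a Zykov-type cloning: for $u,v$ in the same class $V_i$, compare the external contributions $\prod_{z\ne u,v} w(uz)$ and $\prod_{z\ne u,v} w(vz)$, and replace the weaker vertex by a clone of the stronger, provided every $s$-set containing the cloned vertex still spans at most $\Sigma_{r,d}(a,s)$ edges. Iterating yields an $r$-partite ``blow-up'' multigraph; the remaining task is to pin down the uniform internal multiplicity of each part as the canonical value ($a-d$ in one class, $a$ in the others, $a+1$ across). Finally, optimising the part sizes over integer partitions of $n$ identifies the value with $\Pi_{r,d}(a,n)$, provided $n$ is large enough for the integer optimisation to settle down (the paper's own $n=6$ counterexample in the $(s,d)=(5,2)$ case shows this ``$n$ sufficiently large'' hypothesis is genuinely needed).

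The main obstacle will be Stage 3, specifically arranging that Zykov-cloning preserves the $(s,\Sigma_{r,d}(a,s))$-constraint in full generality: cloning a vertex can create new tight $s$-sets, and the slack afforded by $s\ge (r-1)(d+1)+2$ must be exploited uniformly. The extremal $s$-set of the construction uses $d+1$ vertices from $V_0$ and one vertex from each of $V_1,\ldots,V_{r-1}$, and a cloning move must be shown never to create a strictly worse $s$-set than this one. Secondary difficulties include proving uniqueness of the Stage 1 optimiser for all admissible $(a,r,s,d)$ (so that the stability conclusion singles out $\mathcal{T}_{r,d}(a,n)$ rather than an alternative) and handling the residual ``atypical'' vertices left by stability, for which one must argue directly from the $s$-set constraint that each atypical vertex can be reassigned to one of the $r$ canonical types without loss in $P(G^*)$.
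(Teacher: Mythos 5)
There is a fundamental mismatch here: the statement you are trying to prove is an open \emph{conjecture} of Day, Falgas-Ravry and Treglown, and the paper does not prove it --- it only records it, and then contributes partial information about the single case $(s,q)=(5,\binom{5}{2}a+4)$, $d=2$. Even in that one case the paper does not establish the conjectured equality: Theorem 1 gives only the asymptotic statement $ex_\Pi(n,5,\binom{5}{2}a+4)\to \Pi_{2,2}(a,n)$ as $a\to\infty$ for $n\ge 7$, an exact answer for $n=5$, and an explicit \emph{counterexample} at $n=6$ showing $ex_\Pi(6,5,\binom{5}{2}a+4)=a^9(a+1)^6>\Pi_{2,2}(a,6)$. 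So there is no ``paper's own proof'' to compare against, and your text should not be presented as a proof of the conjecture.

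Judged on its own terms, your proposal is a research programme rather than a proof: every genuinely hard step is named and then deferred. Stage 1 asserts without argument that the Lagrangian/entropy optimisation is uniquely maximised by the $\mathcal{T}_{r,d}$ profile --- but this is essentially the whole content of the conjecture at the asymptotic level, and it is known to be delicate (the resolved cases in the literature, e.g.\ $(s,q)=(4,6a+3)$, each required bespoke arguments, and the optimal part fractions can be irrational, which also undermines your closing step of ``optimising the part sizes over integer partitions''). Stage 2 invokes stability and strict convexity ``to be checked uniformly in $(a,r,s,d)$'' without any mechanism for doing so. Stage 3's Zykov cloning is exactly where you admit the argument may break (cloning can create new violating $s$-sets), and you give no way around it. Nothing in the sketch engages with the phenomena the paper actually exhibits: that for small $n$ the extremal structure can be entirely different from $\mathcal{T}_{r,d}(a,n)$ (the $C_6$ of $(a+1)$-edges at $n=6$), and that even for $n\ge 7$ the best known upper bound $(a-1)a^{11}(a+1)^9$ does not match the construction for fixed $a$. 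As it stands, the proposal establishes only the trivial lower bound.
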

\par

As the definition of ${\Sigma}_{r,d}(a,n)$, we know $ex_\Pi (n,s,{\Sigma}_{r,d}(a,s))\geq \Pi_{r,d}(a,n)$.
 Conjecture \ref{DFTc} roughly states that for any $d\in \left[0,a-1 \right]$ and other conditions of $n,s,a,r$, when we take $q={\Sigma}_{r,d}(a,s)$, it is a graph $G$ from $\mathcal{T}_{r,d}(a,n)$ maximises the edge-product $P(G)$ amongst all$(n,s,q)$-graphs.
  Mubayi and Terry \cite{MT2} proved that for all $r$ such that $\frac{s}{2} \le r \le s - 1$
and $n \ge s$, $ex_\Pi(n, s, \Sigma_{r,0}(a, n)) = \Pi_{r,0}(a, n)$,
which partly confirms  Conjecture \ref{DFTc} for $d = 0$ . In \cite{DFT}, Day, Falgas-Ravry and Treglown  completed the proof of the $d = 0$ case of Conjecture \ref{DFTc}. Combine the results in \cite{F} and \cite{DFT}, Conjecture \ref{DFTc} asymptotically holds for  $d=1$ and sufficiently large $a$.

  Note that the known results on Conjecture \ref{DFTc} are considering the cases $d=0$ or $d=1$.   In this paper, we consider \eqref{conj} for the case of $d=2$ and the special pair $(s,q)=(5,\binom{5}{2}a+4)$ which is mentioned as an open problem by  Day, Falgas-Ravry and Treglown in \cite{DFT}. We obtain the results as follows.
  \begin{thm}\label{MS1}
    For $(s,q)=(5,\binom{5}{2}a+4)$, we have
  	\begin{enumerate}
  			\item[(i)]
  		 For $n\geq 8$ and  $a\rightarrow \infty$, $ex_\Pi (n,5,\binom{5}{2}a+4)\rightarrow \Pi_{2,2}(a,n)$.
  		%\item[(i)]
        %For $n\geq 8$ and sufficiently large $a$, $ex_\Pi (n,5,\binom{5}{2}a+4)=\Pi_{2,2}(a,n)$.
  		\item[(ii)]
  	    For $n=7$ and $a\ge 3$, we have
  		$$(a-2)(a+1)^{10}a^{10}\leq ex_\Pi (7,5,\binom{5}{2}a+4)<(a-1)a^{11}(a+1)^9. $$ Particularly, $ex_\Pi (7,5,\binom{5}{2}a+4)\rightarrow\Pi_{2,2}(a,7)$ as $a\rightarrow \infty$.
  		\item[(iii)]
  	 	 For $n=6$ and $a\ge 3$, $ex_\Pi (6,5,\binom{5}{2}a+4)=a^9(a+1)^6>\Pi_{2,2}(a,6).$ And
$$\Pi_{2,2}(a,6)=
\begin{cases}
	(a+1)^5a^{10}, a=3,4;\\
	(a-2)(a+1)^8a^6,a\geq 5.\\
\end{cases}$$
  	\item[(iv)]
      For $n=5$ and $a\ge 3$,
  		$ex_\Pi (5,5,\binom{5}{2}a+4)=\Pi_{2,2}(a,5)=(a+1)^4a^6$.

\end{enumerate}
\end{thm}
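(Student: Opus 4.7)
All four parts rest on the same basic tool: double-counting the constraint $\sum_{e\subseteq S}w(e)\leq 10a+4$ over the $\binom{n}{5}$ five-subsets $S$ (each edge lying in $\binom{n-2}{3}$ of them) yields $\sum_e w(e)\leq\binom{n}{2}(a+2/5)$, which I will combine with the elementary integer-AM-GM fact that the product of $N$ nonnegative integers with fixed sum is maximised when the values are as equal as possible.

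Part~(iv), $n=5$, has a single $5$-subset, so the constraint reduces to $\sum_ew(e)\leq 10a+4$; integer AM-GM on the ten edges gives the bound $(a+1)^4a^6$, matched by the $\mathcal{T}_{2,2}(a,5)$-construction with $|V_0|=1$. Part~(iii), $n=6$: the double-counting step yields $\sum_ew(e)\leq 15a+6$, and integer AM-GM on the fifteen edges caps the product at $a^9(a+1)^6$. I realise this bound by placing weight $a+1$ on a $2$-regular subgraph of $K_6$ (for concreteness the cycle $C_6$) and weight $a$ on the remaining nine edges; each $5$-subset $[6]\setminus\{v\}$ then has edge sum exactly $10a+4$. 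The explicit formula for $\Pi_{2,2}(a,6)$ follows from comparing $(a-2)^{\binom{x}{2}}a^{\binom{6-x}{2}}(a+1)^{x(6-x)}$ at $x\in\{0,1,2,3\}$ via a direct ratio computation, showing the maximum switches from $x=1$ to $x=2$ at $a=5$; the strict inequality $a^9(a+1)^6>\Pi_{2,2}(a,6)$ for $a\geq 3$ then follows by a polynomial comparison in each regime.

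Part~(ii), $n=7$: the lower bound is the $\mathcal{T}_{2,2}(a,7)$-construction with $|V_0|=2$, of product $(a-2)(a+1)^{10}a^{10}$. Double counting gives $\sum_ew(e)\leq 21a+8$, whose naive integer-AM-GM maximum is $a^{13}(a+1)^8$. The heart of the argument is to rule this out: writing $H$ for the subgraph of weight-$(a+1)$ edges, the $5$-subset constraint $[7]\setminus\{x,y\}$ becomes $d_H(x)+d_H(y)-[xy\in E(H)]\geq 4$ for every pair, and a case analysis on $\delta(H)$ (low minimum degree forces a large $H$-independent set incompatible with $|E(H)|=8$; minimum degree $\geq 3$ forces $|E(H)|\geq 11>8$) contradicts the existence of such an $H$. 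The next candidate $(a-1)a^{11}(a+1)^9$ (realised as $9$ heavy, $11$ medium, and $1$ edge of weight $a-1$) is ruled out by the same style of argument, now with a strengthened constraint at the endpoints of the light edge forcing $d_H(u),d_H(v)\geq 3$ followed by a degree-sum contradiction on the remaining five vertices; any remaining configurations are compared to $(a-1)a^{11}(a+1)^9$ by elementary polynomial inequalities. This degree-sequence analysis is the main technical obstacle.

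Part~(i), $n\geq 8$ and $a\to\infty$: the lower bound $\Pi_{2,2}(a,n)$ is attained by the canonical construction, and the upper bound from double counting plus integer AM-GM is $a^{\binom{n}{2}-K}(a+1)^K$ with $K=\lfloor 2\binom{n}{2}/5\rfloor$. Expanding $\log(1+c/a)$ to first order in $1/a$, both $\log\Pi_{2,2}(a,n)$ and the logarithm of this upper bound equal $\binom{n}{2}\log a+O(1/a)$, so the ratio of the upper bound to $\Pi_{2,2}(a,n)$ tends to $1$ as $a\to\infty$, proving the claimed asymptotic equality.
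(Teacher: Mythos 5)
Your parts (i), (iii) and (iv) follow essentially the paper's own route (averaging over $5$-sets, the integer AM--GM bound, and the explicit constructions), and your degree-sequence treatment of the $n=7$ case with all multiplicities in $\{a,a+1\}$ and of the configuration $(a-1,a^{11},(a+1)^9)$ is a workable alternative to the paper's $C_4$-based Claim: the constraint $d_H(x)+d_H(y)-[xy\in E(H)]\geq 4$ (resp.\ $\geq 5$ at an endpoint of the light edge) together with $e(H)=8$ (resp.\ $9$) can indeed be pushed to a contradiction, although the cases $\delta(H)=2$ and $d_H(u)=d_H(v)=4$ need a little structure beyond pure degree sums.

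The genuine gap is in your final sentence for $n=7$: ``any remaining configurations are compared to $(a-1)a^{11}(a+1)^9$ by elementary polynomial inequalities.'' This fails for edges of multiplicity $a+2$. The multiset consisting of one edge of multiplicity $a+2$, six edges of multiplicity $a+1$ and fourteen edges of multiplicity $a$ has total $21a+8$, so it survives your double-counting step, and its product is $(a+2)(a+1)^6a^{14}$, which is \emph{larger} than $(a-1)a^{11}(a+1)^9$ for every $a\geq 3$ (the comparison reduces to $(a+2)a^3=a^4+2a^3>a^4+2a^3-2a-1=(a-1)(a+1)^3$). Hence no AM--GM or polynomial comparison can dispose of it; one must use the $5$-set constraints \emph{locally} to show that an edge of multiplicity $a+2$ forces almost all edges meeting or near it to have multiplicity exactly $a$, so that at most four edges of multiplicity $a+1$ can coexist with it, giving $P(G)\leq (a+2)(a+1)^4a^{16}<(a-1)a^{11}(a+1)^9$. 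This is exactly the content of the paper's Case~II (with its three subcases), and it is a substantive piece of the proof of the strict upper bound in part (ii) that your argument omits. (For multiplicities $a+3$ and $a+4$ the naive bounds do fall below $(a-1)a^{11}(a+1)^9$, so only the $a+2$ case is at issue, but it must be handled structurally.)
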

\par

It is not difficult to verify that ${\Sigma}_{2,2}(a,5)=\binom{5}{2}a+4$, therefore, our results above imply that %\eqref{conj} holds for any $a\ge 3$ when $n=5$, while
\eqref{conj} does not hold for any $a\ge 3$ when $n=6$.  This shows that for $(s,q)=(5,\binom{5}{2}a+4)$ and $d=2$, $n$ needs to be sufficiently large for \eqref{conj} to hold.

     The rest of the paper is organized as follows. In
Section 2, we introduce some more notation and preliminaries.
We give the proof of Theorem \ref{MS1} in Section 3.

    \section{Preliminaries}
    The following integral version of the AM-GM inequality was presented in \cite{DFT},  we will use it frequently.
	\begin{lem}\label{gg}\cite{DFT}
	Let $ a,n\in [0,n]$, and let $\omega_1,...,\omega_n$ be non-negative integers with $\sum_{i=1}^{n} \omega_i=an+t$.
	Then the following hold:\\
	(i)  $ \Pi_{i=1} ^n \omega_i \leq a^{n-t}(a+1)^t$;\\
	(ii)  if $ t\leq n-2$ and $\omega_1=a-1$
	then $\Pi_{i=1} ^t \omega_i \leq (a-1)a^{n-t-2}(a+1)^{t+1}$.
	\end{lem}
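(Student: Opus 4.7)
The plan is to prove (i) by a standard discrete smoothing (swap) argument on the integer tuple $(\omega_1,\ldots,\omega_n)$, and then to deduce (ii) as an immediate corollary of (i) by fixing one coordinate at $a-1$ and applying (i) to the remaining $n-1$ coordinates.

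For (i), I take an extremal tuple of non-negative integers with sum $an+t$ maximising $\prod_{i=1}^n \omega_i$. If some $\omega_i$ vanishes the product is $0$, so we may assume $\omega_i\ge 1$ for all $i$ (the trivial case $an+t=0$ aside). The key observation is the swap inequality: if $\omega_i \ge \omega_j+2$, then replacing $(\omega_i,\omega_j)$ by $(\omega_i-1,\omega_j+1)$ preserves the sum, and since
\begin{equation*}
(\omega_i-1)(\omega_j+1) - \omega_i\omega_j \; = \; \omega_i - \omega_j - 1 \; \ge \; 1,
\end{equation*}
strictly increases the product. Therefore, at the maximum every pair of coordinates differs by at most $1$, and together with $\sum_i \omega_i = an+t$ this forces the multiset of values to consist of exactly $n-t$ copies of $a$ and $t$ copies of $a+1$, giving $\prod_{i=1}^n \omega_i \le a^{n-t}(a+1)^t$.

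For (ii), I fix $\omega_1=a-1$ and observe that $\sum_{i=2}^n \omega_i = an + t - (a-1) = a(n-1)+(t+1)$. The hypothesis $t \le n-2$ ensures $t+1 \le n-1$, so part (i) applies to the $n-1$ variables $\omega_2,\ldots,\omega_n$ with parameters $(a,n-1,t+1)$, yielding
\begin{equation*}
\prod_{i=2}^n \omega_i \; \le \; a^{(n-1)-(t+1)}(a+1)^{t+1} \; = \; a^{n-t-2}(a+1)^{t+1}.
\end{equation*}
Multiplying by $\omega_1 = a-1$ gives the claimed bound $\prod_{i=1}^n \omega_i \le (a-1)a^{n-t-2}(a+1)^{t+1}$.

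The argument is essentially routine, and I do not anticipate any substantive obstacle. The only places that need care are the boundary checks: the degenerate case $an+t=0$ in part (i), where the smoothing cannot start but the inequality holds trivially, and verifying in part (ii) that $t+1$ lies in the admissible range $[0,n-1]$ before invoking (i), which is precisely the role of the hypothesis $t\le n-2$.
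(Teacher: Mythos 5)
Your argument is correct: the discrete smoothing (swap) step for (i) and the reduction of (ii) to (i) with parameters $(a,n-1,t+1)$ are both sound, and you rightly flag the degenerate zero-product case and the role of $t\le n-2$. Note, however, that this paper does not prove the lemma at all --- it is quoted from the reference [DFT] --- so there is no in-paper proof to compare against; your proof is the standard one (and it implicitly fixes the statement's typos, reading the hypothesis as $t\in[0,n]$ rather than ``$a,n\in[0,n]$'' and the conclusion of (ii) as a bound on $\prod_{i=1}^{n}\omega_i$ rather than $\prod_{i=1}^{t}\omega_i$), which is exactly the right reading.
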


Considering the maximum $P(G)$ among graphs in $\mathcal{T}_{2,2}(a,n)$, we obtain the following lemma.
	\begin{lem}\label{MS}
		Let $a\in [3,+\infty)$, $n\in [5,+\infty )$,
		for the graph $G\in \mathcal{T}_{2,2}(a,n)$, let $\cup_{i=0}^{1}V_i$ be the canonical partition of $G$. Set $|V_0|=x$, then $P(G) $ is maximum among all the graphs in $\mathcal{T}_{2,2}(a,n)$ if and only if
	$$ n\in \left((x-1)\frac{\ln(1-\frac{3}{a+1})}{\ln(1-\frac{1}{a+1})}+x,x \frac{\ln(1-\frac{3}{a+1})}{\ln(1-\frac{1}{a+1})}+x+1\right]
.$$
		Moreover, $$\Pi_{2,2}(a,n)=(a-2)^{\binom{x}{2}}a^{\binom{n-x}{2}}(a+1)^{x(n-x)}$$
for $ n\in \left((x-1)\frac{\ln(1-\frac{3}{a+1})}{\ln(1-\frac{1}{a+1})}+x,x \frac{\ln(1-\frac{3}{a+1})}{\ln(1-\frac{1}{a+1})}+x+1\right]$.	\end{lem}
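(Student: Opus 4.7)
The approach is essentially a one-variable discrete optimisation. Every $G\in\mathcal{T}_{2,2}(a,n)$ has its edge multiplicities completely determined by the Construction once the canonical partition is fixed, so with $|V_0|=x$ we get
$$P(G)=f(x):=(a-2)^{\binom{x}{2}}a^{\binom{n-x}{2}}(a+1)^{x(n-x)}.$$
Thus $\Pi_{2,2}(a,n)=\max_{0\le x\le n}f(x)$, and I plan to locate the optimal $x$ by computing the ratio $f(x+1)/f(x)$ and finding where it crosses $1$.

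Using $\binom{x+1}{2}-\binom{x}{2}=x$, $\binom{n-x}{2}-\binom{n-x-1}{2}=n-x-1$, and $(x+1)(n-x-1)-x(n-x)=n-2x-1$, I get
$$\frac{f(x+1)}{f(x)}=\frac{(a-2)^{x}(a+1)^{n-2x-1}}{a^{n-x-1}}.$$
Passing to logarithms, $f(x+1)\ge f(x)$ is equivalent to $(n-1)\ln\tfrac{a+1}{a}\ge x\ln\tfrac{(a+1)^{2}}{a(a-2)}$. The decisive identity is
$$\ln\frac{(a+1)^{2}}{a(a-2)}=\ln\frac{a+1}{a}+\ln\frac{a+1}{a-2}=(1+\alpha)\ln\frac{a+1}{a},\qquad \alpha:=\frac{\ln(1-3/(a+1))}{\ln(1-1/(a+1))},$$
which, since $\ln\tfrac{a+1}{a}>0$, reduces the inequality to the clean form $n\ge x\alpha+x+1$.

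Hence $f$ is unimodal in $x$: $f(x+1)>f(x)$ whenever $n>x\alpha+x+1$, and $f(x+1)<f(x)$ whenever $n<x\alpha+x+1$. So $f(x)=\Pi_{2,2}(a,n)$ precisely when $f(x)\ge f(x-1)$ and $f(x)\ge f(x+1)$, i.e.\ when $n>(x-1)\alpha+x$ and $n\le x\alpha+x+1$, which is the half-open interval in the statement. Substituting this optimal $x$ back into $f$ yields the displayed closed form for $\Pi_{2,2}(a,n)$. The argument is essentially routine algebra and presents no real obstacle. The only delicate point is the boundary $n=x\alpha+x+1$, at which $f(x)=f(x+1)$ and two consecutive indices are simultaneously optimal; the half-open convention in the lemma's interval breaks this tie by associating the boundary with $x$.
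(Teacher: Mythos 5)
Your proposal is correct and follows essentially the same route as the paper: both reduce $P(G)$ to the single-variable function $f(x)=(a-2)^{\binom{x}{2}}a^{\binom{n-x}{2}}(a+1)^{x(n-x)}$, compare consecutive values of $x$ (the paper phrases this as moving one vertex between $V_0$ and $V_1$), use monotonicity of the ratio (the paper via $(a+1)^2\ge a(a-2)$, you via the increasing threshold $x\alpha+x+1$) to get unimodality, and solve the two local inequalities to obtain the stated interval and the closed form for $\Pi_{2,2}(a,n)$. Your handling of the boundary tie matches the lemma's half-open interval convention, so no gap remains.
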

     \begin{proof}
     Let $G$ be an $n$-vertex graph from $\mathcal{T}_{2,2}	(a,n).$ Suppose that $V_0$ of $G$ has $x$ vertices, and $V_1$ has $n-x$ vertices.
     By simple calculation, when  $V_0$ has $x$ vertices, the product of the edge multiplicities is $(a-2)^{\binom{x}{2}}a^{\binom{n-x}{2}}(a+1)^{x(n-x)}$.
     Now we define the new graph $G^{'}$ to be a graph obtained from $G$ by moving a vertex from $V_1$ to $V_0$. Considering the changing of the product of the edge multiplicities by moving a vertex from $V_1$ to $V_0$, we have that $\frac{P(G)}{P(G^{'})}=\frac{a^{n-x-1}(a+1)^x}
     {(a-2)^x(a+1)^{n-x-1}}$,
     which means that the product of the edge multiplicities is increased when the quantity is less than one and otherwise decreased. \par
       If
       $n\in \left((x-1)\frac{\ln(1-\frac{3}{a+1})}{\ln(1-\frac{1}{a+1})}+x,x \frac{\ln(1-\frac{3}{a+1})}{\ln(1-\frac{1}{a+1})}+x+1\right],$
       then we find that $n$ satisfies the following inequalities:
        $$\begin{cases}
       	\frac{a^{n-x}(a+1)^{x-1}}{(a-2)^{x-1}(a+1)^{n-x}}<1,\\
       	\frac{a^{n-x-1}(a+1)^x}{(a-2)^x(a+1)^{n-x-1}}\geq 1, \\
       \end{cases}$$
   which means that the product of the edge multiplicities in $G$ is decreased whether moving a vertex from $V_1$ to $V_0$ or moving a vertex from $V_0$ to $V_1.$
   \par
        As $(a+1)^2\geq a(a-2)$ holds for any $a\geq3$, the inequality $\frac{a^{n-k}(a+1)^{k-1}}{(a-2)^{k-1}(a+1)^{n-k}}
        \leq\frac{a^{n-k-1}(a+1)^k}{(a-2)^k(a+1)^{n-k-1}}$ established when $k$ is a positive integer. Therefore, $n$ satisfies:
       \begin{equation}\label{ms}
       	\begin{cases}
       		\frac{a^{n-2}(a+1)}{(a-2)(a+1)^{n-2}}<1,\\
       		\frac{a^{n-3}(a+1)^2}{(a-2)^2(a+1)^{n-3}}<1,\\
       		...\\
       		\frac{a^{n-x}(a+1)^{x-1}}{(a-2)^{x-1}(a+1)^{n-x}}<1,\\
       		\frac{a^{n-x-1}(a+1)^x}{(a-2)^x(a+1)^{n-x-1}}\geq 1, \\
       		...\\
       		\frac{a^{1}(a+1)^{n-2}}{(a-2)^{n-2}(a+1)^{1}}\geq 1 ,\\
       		\frac{a^{0}(a+1)^{n-1}}{(a-2)^{n-1}(a+1)^{0}}\geq 1. \\
       	\end{cases}
       \end{equation}

       This system of inequalities means that the quantity of the product of the edge multiplicities is maximum when $|V_0|=x.$
       Thus we have proved the sufficiency of Lemma \ref{MS}. Now we  prove the necessity of Lemma \ref{MS}.\par

       For the necessity of Lemma \ref{MS}, if $P(G)$ is maximum when $|V_0|=x$, then both adding a vertex to $V_0$ and taking out a vertex from $V_0$ will decrease the quantity of the product of the edge multiplicities, which means that:

       $$\begin{cases}
       	\frac{a^{n-x}(a+1)^{x-1}}{(a-2)^{x-1}(a+1)^{n-x}}<1,\\
       	\frac{a^{n-x-1}(a+1)^x}{(a-2)^x(a+1)^{n-x-1}}\geq 1, \\
       \end{cases}$$
      Solving the above set of inequalities, we have that:
       $$n\in \left((x-1)\frac{\ln(1-\frac{3}{a+1})}{\ln(1-\frac{1}{a+1})}+x,x \frac{\ln(1-\frac{3}{a+1})}{\ln(1-\frac{1}{a+1})}+x+1\right].$$

        As a result, when $ n\in \left((x-1)\frac{\ln(1-\frac{3}{a+1})}{\ln(1-\frac{1}{a+1})}+x,x \frac{\ln(1-\frac{3}{a+1})}{\ln(1-\frac{1}{a+1})}+x+1\right]$, we have $\Pi_{2,2}(a,n)=(a-2)^{\binom{x}{2}}a^{\binom{n-x}{2}}(a+1)^{x(n-x)}$.

       \end{proof}

           \begin{lem}\label{l2}
       	Let $G$ be an $n$-vertex graph from $\mathcal{T}_{2,2}(a,n)$, and let $\cup_{i=0}^{1}V_i$ be the canonical partition of $G$. Then among all the graphs in $\mathcal{T}_{2,2}(a,n)$, the product of the edge multiplicities will be maximum when $|V_0|$=2 if and
       	only if
       	$$ n\in \left(\frac{\ln(1-\frac{3}{a+1})}{\ln(1-\frac{1}{a+1})}+2,
       2\frac{\ln(1-\frac{3}{a+1})}{\ln(1-\frac{1}{a+1})}+3\right].$$
       \end{lem}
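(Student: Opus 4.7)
The plan is to obtain Lemma \ref{l2} as the specialization $x = 2$ of Lemma \ref{MS}. Lemma \ref{MS} exactly characterizes, for each admissible $x$, the range of $n$ for which the maximum of $P(G)$ over $G \in \mathcal{T}_{2,2}(a,n)$ is attained at $|V_0| = x$, namely
$$n \in \left((x-1)\frac{\ln(1-\tfrac{3}{a+1})}{\ln(1-\tfrac{1}{a+1})} + x,\; x\frac{\ln(1-\tfrac{3}{a+1})}{\ln(1-\tfrac{1}{a+1})} + x + 1\right].$$
Substituting $x = 2$ immediately recovers the interval claimed in Lemma \ref{l2}, and both directions of the biconditional are inherited from the corresponding directions in Lemma \ref{MS}.

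For a more self-contained derivation, let $G_k \in \mathcal{T}_{2,2}(a,n)$ denote a graph with $|V_0| = k$. A direct multiplicity count, as in the proof of Lemma \ref{MS}, yields
$$\frac{P(G_k)}{P(G_{k+1})} \;=\; \frac{a^{n-k-1}(a+1)^k}{(a-2)^k (a+1)^{n-k-1}}.$$
The only two comparisons that bear on whether $|V_0| = 2$ is locally optimal are with $|V_0| = 1$ and $|V_0| = 3$; requiring $P(G_1) < P(G_2)$ and $P(G_2) \geq P(G_3)$ yields, after setting $k = 1$ and $k = 2$ above,
$$\frac{a^{n-2}(a+1)}{(a-2)(a+1)^{n-2}} < 1 \qquad \text{and} \qquad \frac{a^{n-3}(a+1)^2}{(a-2)^2(a+1)^{n-3}} \geq 1.$$
Rewriting these as $(1-\tfrac{1}{a+1})^{n-2} < 1-\tfrac{3}{a+1}$ and $(1-\tfrac{1}{a+1})^{n-3} \geq (1-\tfrac{3}{a+1})^2$, taking logarithms, and dividing by $\ln(1-\tfrac{1}{a+1}) < 0$ (which flips the inequalities) produces precisely the two endpoints stated in Lemma \ref{l2}.

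To promote these local conditions at $|V_0| = 2$ to global optimality, I would invoke the inequality $(a+1)^2 \geq a(a-2)$ (valid for $a \geq 3$) that was already used in the proof of Lemma \ref{MS}: it forces the consecutive ratios $P(G_k)/P(G_{k+1})$ to be monotone in $k$, so the sequence $k \mapsto P(G_k)$ is unimodal and checking the two neighbors of $k = 2$ suffices. I do not anticipate a genuine obstacle, since Lemma \ref{l2} is essentially a corollary of Lemma \ref{MS} with $x$ fixed to $2$.
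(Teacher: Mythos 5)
Your proposal is correct and follows essentially the same route as the paper: the paper's proof of this lemma is exactly the substitution $x=2$ into the system of inequalities established in Lemma \ref{MS}, and your self-contained ratio computation together with the monotonicity argument via $(a+1)^2 \geq a(a-2)$ merely replays the proof of that lemma in the special case.
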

       \begin{proof}
       	By substituting $x=2$ into the system of the inequalities  (\ref{ms}), we get $$ n\in \left(\frac{\ln(1-\frac{3}{a+1})}{\ln(1-\frac{1}{a+1})}+2,
       2\frac{\ln(1-\frac{3}{a+1})}{\ln(1-\frac{1}{a+1})}+3\right], $$ which proves the lemma.
        \end{proof}

       	If we let
       $F(a)=\frac{\ln(1-\frac{3}{a+1})}{\ln(1-\frac{1}{a+1})},a\geq 3$, we find the function is monotone decreasing for $a$. Therefore, the maximum of $ F(a)$ is $F(3)=4.82$. Also, it is not difficult to obtain that $\lim_{a\to +\infty} F(a)=3 $.

\section{Proof of Theorem \ref{MS1}}
\subsection{The case when \bf{$n\geq 7$} }
  For $n\geq 7$, let $G$ be a graph from $\mathcal{T}_{2,2}(a,n)$ and $n$ be a positive integer. Let $\cup_{i=0}^{1}V_i$ be the canonical partition of $G$. Suppose $P(G)$ is maximum when $|V_0|=x$, among all the graphs in $\mathcal{T}_{2,2}(a,n)$. By Lemma \ref{ms}, we know:
\begin{equation}\label{qq}
	n\in \left((x-1)\frac{\ln(1-\frac{3}{a+1})}{\ln(1-\frac{1}{a+1})}+x,x \frac{\ln(1-\frac{3}{a+1})}{\ln(1-\frac{1}{a+1})}+x+1\right].	
\end{equation}
Moreover, $\Pi_{2,2}(a,n)=(a-2)^{\binom{x}{2}}a^{\binom{n-x}{2}}(a+1)^{x(n-x)}$.
\par
Let $G$ be a graph from $\mathcal{F}(n,5,\binom{5}{2}a+4)$. By averaging over all 5-sets, we obtain that the number of edges of $G$ satisfying that
$$
e(G)\leq\left \lfloor\frac{\binom{n}{5}}{\binom{n-2}{3}}
\left(\binom{5}{2}a+4\right)
\right\rfloor
=\frac{n(n-1)}{2}a+\left\lfloor\frac{n(n-1)}
{5}\right\rfloor
.$$
By Lemma \ref{gg} (i), we have
$$
P(G)\leq a^{\frac{n(n-1)}{2}-\lfloor\frac{n(n-1)}{5}\rfloor}(a+1)^{\lfloor\frac{n(n-1)}{5}\rfloor}.
$$\par
As a result, we have
\begin{align*}
	\Pi_{2,2}(a,n)&=(a-2)^{\binom{x}{2}}a^{\binom{n-x}{2}}(a+1)^{x(n-x)}\\
	&\leq ex_\Pi (n,5,\binom{5}{2}a+4)\\
	&\leq a^{\frac{n(n-1)}{2}-\lfloor\frac{n(n-1)}{5}\rfloor}(a+1)^{\lfloor\frac{n(n-1)}{5}\rfloor}.
\end{align*}

We will prove
\begin{equation}\label{e2}
	\lim_{a\to +\infty}\frac{a^{\frac{n(n-1)}{2}-\lfloor
			\frac{n(n-1)}{5}\rfloor}(a+1)^{\lfloor
			\frac{n(n-1)}{5}\rfloor}}{(a-2)^{\binom{x}{2}}
		a^{\binom{n-x}{2}}(a+1)^{x(n-x)}}=1.
\end{equation}
Note that the case (i) in Theorem \ref{MS1} follows if  the equality \eqref{e2} holds.
Let
$$\begin{cases}
	h_1(a)=a^{\frac{n(n-1)}{2}-\lfloor\frac{n(n-1)}{5}\rfloor}(a+1)^{\lfloor\frac{n(n-1)}{5}\rfloor},\\
	h_2(a)=(a-2)^{\binom{x}{2}}a^{\binom{n-x}{2}}(a+1)^{x(n-x)}.
\end{cases} $$
Expanding $h_1(a)$ and $ h_2(a)$, we have $$h_1(a)=a^{\frac{n(n-1)}{2}}+...+a^{\frac{n(n-1)}{2}-\lfloor\frac{n(n-1)}{5}\rfloor},$$ which is a polynomial on variable $a$ with the highest degree  $\frac{n(n-1)}{2}$, and the coefficient of $a^{\frac{n(n-1)}{2}}$ is 1. Also,
$$h_2(a)=a^{\frac{n(n-1)}{2}}+...+(-2)^{\frac{x(x-1)}{2}}a^{\frac{(n-x)(n-x-1)}{2}},$$ which is a polynomial on variable $a$ with the highest degree  $\frac{n(n-1)}{2}$, and the coefficient of $a^{\frac{n(n-1)}{2}}$ is 1. It follows that
$\lim_{a\to +\infty}\frac{h_1(a)}{h_2(a)}=1.$
\subsection{The case when \bf $n=7$ }
For $n= 7$, note that  $$ 7\in \left(\frac{\ln(1-\frac{3}{a+1})}{\ln(1-\frac{1}{a+1})}+2,
2\frac{\ln(1-\frac{3}{a+1})}{\ln(1-\frac{1}{a+1})}+3\right].$$
Applying  Lemma \ref{l2} for graph $H\in\mathcal{T}_{2,2}(a,7)$, if $H$ maximizes the product of the edge multiplicities, then $V_0$ of $H$ consists of two vertices.
Therefore, $\Pi_{2,2}(a,7)=(a-2)a^{10}(a+1)^{10}$. And $$ex_\Pi (7,5,\binom{5}{2}a+4)\ge \Pi_{2,2}(a,7)=(a-2)a^{10}(a+1)^{10}. $$
\par
Let $G$ be a product-extremal graph in
$\mathcal{F}(7,5,
\binom{5}{2}a+4)$.
By averaging over all 5-sets, we see that
$$e(G)\leq \left\lfloor\frac{\binom{7}{5}}{\binom {5}{3}}\left(\binom{5}{2} a+4\right)\right\rfloor= 21a+\left\lfloor\frac{42}{5}\right\rfloor=21a+8.$$
If $e(G)\leq 21a+7$, then by Lemma \ref{gg} (i),
$$P(G)\leq a^{14}(a+1)^7.$$
\par
Suppose now that
\begin{equation}\label{eG}
	e(G)=21a+8.
\end{equation}
\par
If $G$ contains at least one edge of multiplicity at most $a-1$, then by Lemma \ref{gg} (ii), $P(G)\leq (a-1)a^{11}(a+1)^9$. The equation holds if and only if there are one edge with multiplicity $a-1$ and 11 edges with multiplicity $a$ and 9 edges with multiplicity $a+1$.\par

On the other hand, suppose all the edges of $G$ have multiplicity at least $a$. Since $G\in\mathcal{F}(7,5,\binom{5}{2}a+4)$, the edges in $G$ have multiplicity between $a$ and $a+4$. We need to  consider the various values of the edge multiplicity of $G$, beginning with the easiest cases. \par
Case  \uppercase\expandafter{\romannumeral1}: Every edge of $G$ has multiplicity either $a$ or $a+1$. \par
%(\ref{eG})
From \eqref{eG}, we obtain that	$G$ has exactly 8 edges of multiplicity $a+1$.  We denote by $G^{a+1}$  the graph spanned by edges of multiplicity $a+1$ of $G$.   Then we  claim that there is a cycle on 4 vertices in $G^{a+1}$.
\begin{claim}\label{qq}
	There must have a $C_4$  in $G^{a+1}$.
\end{claim}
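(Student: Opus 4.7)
The plan is to extract a strong pairwise-degree condition on $G^{a+1}$ from the 5-set constraint, enumerate the very few possible degree sequences, and finish with a short case analysis. Since we are in Case I (all multiplicities in $\{a,a+1\}$) and $e(G)=21a+8$ by \eqref{eG}, we have $e(G^{a+1})=8$, and for every 5-set $X$ the constraint $\sum_{xy\in\binom{X}{2}} w(xy) \leq 10a+4$ becomes $e(G^{a+1}[X])\leq 4$. Equivalently, for every pair $\{u,v\}$ the number of edges of $G^{a+1}$ incident to $\{u,v\}$ is at least $8-4=4$, which translates to $d(u)+d(v)-[uv\in E(G^{a+1})]\geq 4$. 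A short check then gives $\delta(G^{a+1})\geq 2$: a vertex of degree $0$ would force the other six to have degree $\geq 4$ (sum $\geq 24$), and a vertex of degree $1$ forces their degree-sum to be at least $19$, both contradicting $\sum d=16$.

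With $\sum d=16$ on $7$ vertices and $\delta\geq 2$, only the degree sequences $(4,2,2,2,2,2,2)$ and $(3,3,2,2,2,2,2)$ are possible. The sequence $(3,3,2^5)$ is ruled out immediately: for any two degree-$2$ vertices $w,w'$ the pair condition becomes $4-[ww'\in E]\geq 4$, forcing the five degree-$2$ vertices to be independent. All $10$ of their edge-endpoints must then land on the two degree-$3$ vertices $u,v$, giving $d(u)+d(v)\geq 10$, which contradicts $d(u)+d(v)=6$.

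For the remaining sequence $(4,2^6)$, let $u$ be the degree-$4$ vertex, write $N(u)=\{a_1,a_2,a_3,a_4\}$ and let $\{b_1,b_2\}$ be the non-neighbors of $u$. Then $H_0:=G^{a+1}-u$ has $4$ edges with $d_{H_0}(a_i)=1$ and $d_{H_0}(b_j)=2$, and a direct check of this degree sequence on six vertices yields exactly two graphs up to relabeling: either (A) $b_1b_2$ is an edge and the remaining three edges are $b_1a_i,\, b_2a_j,\, a_ka_l$ with $\{i,j,k,l\}=\{1,2,3,4\}$, or (B) $b_1\not\sim b_2$ and $b_1,b_2$ have disjoint two-element neighborhoods among the $a$'s (so together they cover all four). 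In case (A) the 5-set $\{u,a_1,a_2,a_3,a_4\}$ contains the five edges $ua_1,ua_2,ua_3,ua_4,a_ka_l$ of $G^{a+1}$, violating the 5-set bound. In case (B), say $b_1\sim a_1,a_2$; then $a_1,u,a_2,b_1$ form a $C_4$ in $G^{a+1}$.

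The main obstacle is really just the clean classification of $H_0$ in the $(4,2^6)$ case; once the two possible $H_0$'s are identified, each subcase is killed by a one-line observation (the 5-set bound in (A), an explicit $C_4$ in (B)).
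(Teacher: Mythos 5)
Your proof is correct, but it takes a genuinely different route from the paper's. The paper first argues (by a somewhat informal covering argument) that $G^{a+1}$ must contain a path with three edges, then assumes there is no $C_4$ and runs a case analysis on how the vertices $5,6,7$ attach to that path and to each other, reaching contradictions via $5$-sets spanning at least $10a+5$. You instead complement the $5$-set condition on $7$ vertices into a pairwise condition, namely $d(u)+d(v)-[uv\in E(G^{a+1})]\geq 4$ for every pair, which forces $\delta(G^{a+1})\geq 2$ and pins the degree sequence of the $8$-edge graph down to $(4,2,2,2,2,2,2)$ after eliminating $(3,3,2,2,2,2,2)$; a short classification of $G^{a+1}-u$ then finishes, since configuration (A) violates a $5$-set bound and configuration (B) exhibits a $C_4$ explicitly. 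This is more mechanical and arguably more rigorous than the paper's path-existence step. Two minor remarks, neither affecting validity: in the degree-$1$ elimination the bound is in fact $20$ rather than $19$ (the unique neighbour must have degree at least $4$), which only strengthens the contradiction; and if you apply your own pair condition to an adjacent pair of degree-$2$ vertices inside the $(4,2^6)$ case, configuration (B) is also infeasible, so no graph satisfying the Case I hypotheses exists at all --- the claim then holds (vacuously), and this observation would even shortcut the paper's subsequent argument that uses the $C_4$ to dispose of Case I.
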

\begin{proof}
	Above all, there must be a path of length 3 in $G^{a+1}$. Indeed, since $G^{a+1}$ has 7 vertices and 8 edges, it must have a path of length 2. If there is not a path of length 3, then $G^{a+1}$ must have as many as possible paths of length 2 to cover the 8 edges in $G^{a+1}$. As there are 7 vertices,  there must be a vertex adjacent to the midpoint of only one path of length 2.  Then there are at most 5 edges in $G^{a+1}$, a contradiction. So $G^{a+1}$ must have a path of length 3.
	\par
	Suppose there is not a $C_4$ in $G^{a+1}$, we want to get a contradiction. We denote by $P$ a path of length 3 in $G^{a+1}$, and its vertices set is $\left\{1,2,3,4 \right\}$, such that $i$ and $i+1$ are adjacent in $P$ for $1\le i\le 3$. Then the endpoints of $P$ are 1 and 4 and the remained vertices are  $5,6,7$.
	 \par
	Since $G^{a+1}$ does not have a $C_4$, the vertices 1 and 4 are not adjacent in $G^{a+1}$.
	If $\{1,3\}$ and $\{2,4\}$ are edges in $G^{a+1}$, we find that the 5-vertex set $\left\{1,2,3,4,5\right\}$ spans edges with multiplicities at least $10a+5$, a contradiction with $G\in \mathcal{F}(7,5,\binom{2}{2}a+4)$.
	If one of $\{1,3\}$ and $\{2,4\}$ is an edge in $G^{a+1}$, then there exists an edge between two sets $\{1,2,3,4\}$ as $\left\{5,6,7 \right\}$ can span at most 3 edges in $G^{a+1}$. Hence we can obtain a 5-set containing vertices $1,2,3,4$, which spans edges with multiplicities summation at least $10a+5$.
	 \par
	
	The statement above means that  the vertices of $P$ spanned no edges in $G^{a+1}$ except the edges of the path. And we claim that for any  vertex $u\in \left\lbrace 5,6,7\right\rbrace $, it can send at most one edge into $P$  in $G^{a+1}$. Otherwise $\{u,1,2,3,4\}$ is a 5-vertex set which spans edges with multiplicities summation at least $10a+5$, a contradiction with  $G\in \mathcal{F}(7,5,\binom{5}{2}a+4)$.\par
	Note that there are 8 edges in $G^{a+1}$ and any  vertex in $\left\lbrace 5,6,7\right\rbrace$ can send at most one edge into $P$  in $G^{a+1}$ and $\left\lbrace 5,6,7\right\rbrace$ can span at most 3 edges in $G^{a+1}$, then we find that either  $\left\lbrace 5,6,7\right\rbrace$ span 3 edges or  $\left\lbrace 5,6,7\right\rbrace$  span 2 edges.\par
	Case a: If  $\left\lbrace 5,6,7\right\rbrace$ span 2 edges, then all vertices in  $\left\lbrace 5,6,7\right\rbrace$ must send one edge into $P$ in $G^{a+1}$. Consequently, there must either exist a $C_4$ or a 5-set which spans edges with multiplicities summation at least $10a+5$ , a contradiction.\par
	Case b: If  $\left\lbrace 5,6,7\right\rbrace$ span 3 edges,  without loss of generality we suppose that the vertex 5 sends no edges into $P$. There are two cases need to be consider. If there are two vertices in $\left\lbrace 5,6,7\right\rbrace$ are adjacent to the same vertex (without loss of generality we let it be $1$) in $\left\lbrace 1,2,3,4\right\rbrace $, then we find the two vertices with $\left\lbrace 1,2,3\right\rbrace $ form a 5-set which spans edges with multiplicities summation at least $10a+5$.
	Otherwise, without loss of generality we suppose the vertex 6 is adjacent to 1 and the vertex 7 is adjacent to 4 in $G^{a+1}$. Now we find a 5-set $\left\lbrace 1,4,5,6,7\right\rbrace $ which spans edges with multiplicities summation at least $10a+5$.
	Both case make a contradiction with  $G\in \mathcal{F}(7,5,\binom{2}{2}a+4)$.
	Thus, there is a $C_4$ in $G^{a+1}$.
\end{proof}\par

Let $C$ be a cycle of length 4 in $G^{a+1}$, and its vertices set is $\left\{1,2,3,4 \right\}$, such that $i$ and $i+1$ are adjacent in $C$ for $1\le i\le 3$ and 1 is adjacent to 4. As $G\in \mathcal{F}(7,5,
\binom{5}{2}a+4)$, the spanning graph of $\left\{1,2,3,4 \right\}$ in $G^{a+1}$ is only the $C$ and $\left\{5,6,7 \right\}$ send no edges into $C$ in $G^{a+1}$. However, $\left\{5,6,7 \right\}$ can span at most 3 edges with multiplicity $a+1$, which means there are at most 7 edges with multiplicity $a+1$, a contradiction. As a result, edges in $G$ can not only have multiplicity either $a$ or $a+1$.\\

Case \uppercase\expandafter{\romannumeral2} : $G$ contains an edge $e_0$ with multiplicity $a+2$.\par

Since any 5 vertices can span edges with multiplicities at most $10a+4$, then the multiplicity of other edges are one of $a,a+1,a+2$. We consider the following three subcases.

Subcase 1: Suppose no edge with multiplicity $a+1$ or $a+2$ is incident with $e_0$. Then
\begin{align*}
	P(G)&\leq \omega(e_0)a^{10}ex_\Pi (5,5,\binom{5}{2}a+4)\\
	&=(a+2)a^{10}(a+1)^4a^6\\
	&=a^{16}(a+1)^4(a+2).
\end{align*}

Subcase 2: Suppose there is some vertex $v$ sending an edge of multiplicity $a+2$ to one of  endpoints of $e_{0}$ . Then the vertex is unique, it sends exactly one such edge into $e_0$, and every other edges must have multiplicity $a$ as any 5 points can span edges with multiplicities at most $10a+4$. Thus $P(G)\leq (a+2)^2a^{19}$.

Subcase 3: Suppose there is some vertex $v$ sending an edge $e_1$ of multiplicity $a+1$ to one of the endpoints of $e_{0}$. Let the endpoints of $e_0$ be $1,2$ and $v$ be $3$. Without losing generality we suppose $2$ is adjacent to  $3$.  Denote by $P$ the path with vertices 1, 2, 3. \par
If $1$ and $3$ are adjacent by an edge $e_2$ with multiplicity $a+1$, then all other edges must have multiplicity $a$. Otherwise we suppose the vertex 4 sends an edge with multiplicity more than $a$, we find $\left\{1,2,3,4\right\}$ with any vertex in $\left\{5,6,7\right\}$ form a 5-vertex set which spans edges with multiplicities summation at least $10a+5$. A contradiction with $G\in \mathcal{F}(7,5,\binom{5}{2}a+4)$. Then $P(G)\leq (a+2)(a+1)^2a^{18}$. \par

If $1$ and $3$  are adjacent by an edge $e_3$ with multiplicity $a$.
Then there is at most one vertex in $\left\{4,5,6,
7\right\}$ which can send an edge to $P$ with multiplicity at most $a+1$.

\begin{itemize}
	\item If there is a vertex  $4$ sending an edge $e_4$ with multiplicity $a+1$ to $P$, then the rest edges except $\left\lbrace e_0,e_1,e_3,e_4 \right\rbrace$ in $G[ \left\{1,2,3,4\right\} ]$ must have multiplicity $a$ and the rest vertices $5,6,7$ only can send edge with multiplicity $a$ into  $G[\left\{1,2,3,4\right\}]$. Now we consider the edges in  $G[\left\{5,6,7\right\}]$. If there is one edge $e_5$, without losing generality we assume that the endpoints of $e_5$ are $6,7$, and $e_5$ has multiplicity $a+2$. Then  $\left\{1,2,3,6,7\right\}$ is a 5-vertex set which spans edges with multiplicities summation exceed $10a+4$, a contradiction.
	If there is no edge having  multiplicity $a+2$, then $\left\{5,6,7\right\}$ can span at most 2 edges with multiplicity $a+1$. So $P(G)\leq a^{16}(a+1)^4(a+2)$.
	\item If there is no vertex in $\left\{
	4,5,6,7\right\}$ sends an edge with multiplicity $a+1$ to $P$. We claim that there is no edge with multiplicity $a+2$ in $G[\left\{5,6,7\right\}]$. Otherwise we suppose that $G[\left\{5,6,7\right\}]$ has an edge $e^{'}$ incident with $5,6$ with multiplicity $a+2$, then $\left\{1,2,3,5,6\right\}$ is a 5-vertex set which spans edges with multiplicities summation at least $10a+5$, a contradiction. Meanwhile we find there are at most 3 edges with multiplicity $a+1$, so $P(G)\leq a^{17}(a+1)^3(a+2)$.
\end{itemize}
In conclusion, if $G$ contains a edge $e_0$ with multiplicity $a+2$, then we have the following inequalities
	$$\begin{cases}
		P(G)\leq a^{16}(a+1)^4(a+2),\\
		P(G)\leq a^{19}(a+2)^2,\\
		P(G)\leq a^{18}(a+1)^2(a+2),\\
		P(G)\leq a^{16}(a+1)^4(a+2),\\
		P(G)\leq a^{17}(a+1)^3(a+2).\\
	\end{cases}$$
	Through comparing the right-hand side of these inequalities, we find $a^{16}(a+1)^4(a+2)$ is the maximum value. So if $G$ contains an edge $e_0$ with multiplicity $a+2$, then $P(G)\leq a^{16}(a+1)^4(a+2)$.

Case \uppercase\expandafter{\romannumeral3} : $G$ contains an edge $e_0$ of multiplicity $a+3$.\par

Since any 5 vertices can span edges with multiplicities at most $10a+4$,  every  edge except for $e_0$ has multiplicity either $a$ or $a+1$. Suppose there is some vertex $v$ sending an edge of multiplicity $a+1$ to one of the endpoints of $e_{0}$. As any 5 vertices can span edges with multiplicities at most $10a+4$, every other edge has multiplicity exactly $a$. Then $$P(G)\leq P(G[e_0\cup v])a^{19}=a^{19}(a+1)(a+3).$$
 On the other hand suppose there is no edge with multiplicity $a+1$ connect to $e_0$. We claim that there is no path of length 2 in $G^{a+1}$, otherwise there will be a 5-set which span edges with multiplicities at most $10a+4$. Hence there is at most two edge of multiplicity $a+1$. Then
 $$P(G)\leq a^{18}(a+1)^2(a+3).$$
\par
As
$$\frac{a^{18}(a+1)^2(a+3)}{a^{19}(a+1)(a+3)}>1~~\text{when}~~a\geq 3,$$
then if $G$ contains an edge $e_0$ of multiplicity $a+3$, we have
$$P(G)\leq a^{18}(a+1)^2(a+3).$$

Case \uppercase\expandafter{\romannumeral4} : $G$ contains an edge of multiplicity $a+4$. \par
Since any 5 vertices can span edges with multiplicities at most $10a+4$, every other edge has multiplicity exactly $a$, and $P(G)=(a+4)a^{20}$.\\ \par
Consider all the upper bounds for $P(G)$ we obtained above, we have
$$\begin{cases}
	P(G)\leq a^{14}(a+1)^7,\\
	P(G)\leq (a-1)a^{11}(a+1)^9,\\
	P(G)\leq a^{16}(a+1)^4(a+2),\\
	P(G)\leq a^{18}(a+1)^2(a+3),\\
	P(G)\leq a^{20}(a+4).\\
\end{cases}$$
Through comparison we finally get that $(a-1)a^{11}(a+1)^9$ is the maximum upper bound for $P(G)$. However, this bound is not achievable. In fact,  we obtained this bound $(a-1)a^{11}(a+1)^9$ when $G$ contains at least one edge of multiplicity at most $a-1$. The equality holds if and only if there is one edge, denoted by $e$ with multiplicity $a-1$ and 11 edges with multiplicity $a$ and 9 edges with multiplicity $a+1$.  By claim \ref{qq}, we know that  $G^{a+1}$ must have a $C_4$. Let the vertex set of $C_4$ be $\left\lbrace 1,2,3,4\right\rbrace$.\par
Suppose $\left\lbrace 1,2,3,4\right\rbrace$ contains both of the endpoints of $e$,  let the endpoints of $e$ be $1,3$. As $\left\lbrace 1,2,3,4\right\rbrace$ can span at most 5 edges with multiplicity  $a+1$ and $\left\lbrace 5,6,7\right\rbrace$ can span at most 3 edges with multiplicity  $a+1$, there is at least one vertex $u\in\left\lbrace 5,6,7\right\rbrace$ sending edge with multiplicity $a+1$ into $\left\lbrace 1,2,3,4\right\rbrace$. Actually, $\left\lbrace 1,2,3,4,u \right\rbrace$ is a 5-vertex set which spans edges with multiplicities summation at least $10a+5$, a contradiction.\par
Suppose $\left\lbrace 1,2,3,4\right\rbrace$ contains one of the endpoints of $e$, then let the endpoints of $e$ be $1,5$. As $\left\lbrace 1,2,3,4\right\rbrace$ can span only 4 edges with multiplicity  $a+1$ and $\left\lbrace 5,6,7\right\rbrace$ can span at most 3 edges with multiplicity  $a+1$. So there is at least one vertex in $\left\lbrace 5,6,7\right\rbrace$ sending edge with multiplicity $a+1$ into $\left\lbrace 1,2,3,4\right\rbrace$. If 6 or 7 sends edges with multiplicity $a+1$ into $\left\lbrace 1,2,3,4\right\rbrace$, then $\left\lbrace 1,2,3,4 \right\rbrace$ with 6 or 7 form a 5-vertex set which spans edges with multiplicities summation at least $10a+5$, a contradiction. Otherwise 5 must send exactly 2 edges with multiplicity $a+1$ into $\left\lbrace 1,2,3,4\right\rbrace$. Then $\left\lbrace 5,6,7 \right\rbrace$ with the endpoints of the two edges form a 5-vertex set which spans edges with multiplicities summation at least $10a+5$, a contradiction. \par
Suppose $\left\lbrace 1,2,3,4\right\rbrace$ contains no endpoints of $e$,  let the endpoints of $e_0$ be $5,6$. As $\left\lbrace 1,2,3,4\right\rbrace$ can span only 4 edges with multiplicity  $a+1$ and $\left\lbrace 5,6,7\right\rbrace$ can span at most 2 edges with multiplicity  $a+1$. So there is at least one vertex $u\in \left\lbrace 5,6,7\right\rbrace$ sending edge with multiplicity $a+1$ into $\left\lbrace 1,2,3,4\right\rbrace$. Actually, $\left\lbrace 1,2,3,4,u \right\rbrace$ is a 5-vertex set which spans edges with multiplicities summation at least $10a+5$, a contradiction.
 Therefore, we have that $P(G)$ is strictly less than  $(a-1)a^{11}(a+1)^9$. \par
As $\Pi_{2,2}(a,7)\in \mathcal{F}(7,5,\binom{5}{2}a+4)$, we have %$$ex_\Pi (7,5,\binom{5}{2}a+4)\geq \Pi_{2,2}(a,7)=(a-2)(a+1)^{10}a^{10}.$$
%As before mentioned, the quantity $a^{14}(a+1)^7$ is the second largest upper bound which can be attained. By calculating $$\frac{(a-2)(a+1)^{10}a^{10}}{a^{14}(a+1)^7}> 1~~ \text{when}~~a\geq 3,$$  which means that we have a better lower bound for $ex_\Pi (7,5,\binom{5}{2}a+4)$.
%Also
\begin{align*}
	\Pi_{2,2}(a,7)&=(a-2)(a+1)^{10}a^{10}\\
	&\leq ex_\Pi (7,5,\binom{5}{2}a+4)\\
	&<(a-1)a^{11}(a+1)^9,
\end{align*}
 and
  $$\lim_{a\to +\infty}\frac{(a-1)a^{11}(a+1)^9}{(a-2)(a+1)^{10}a^{10}}=1$$
  which proves the case (ii) in Theorem \ref{MS1}.

\par
\subsection{The case when \bf $n=6$ }
 For $n=6$, let $H$ be a product extremal graph in $\mathcal{F}(6,5,\binom{5}{2}a+4)$.
 By averaging over all 5-sets, we see that $$e(H)\leq \frac {\binom {6}{5}}{\binom{4}{3}}\left(\binom{5}{2}a+4\right) =15a+6.$$
 By Lemma \ref{gg} (i), $P(H)\leq a^9(a+1)^6.$ Therefore, $ex_\Pi (6,5,\binom{5}{2}a+4)\leq a^9(a+1)^6.$
 On the other hand, consider the 6-vertex multigraph $H^{'}$ whose edges  of multiplicity $a+1$ form a 6-cycle $C_6$, and all other edges are of multiplicity $a$. We have $P(H^{'})=a^9(a+1)^6$ and $H\in$ $\mathcal{F}(6,5,\binom{5}{2}a+4)$. Hence, $ex_\Pi (6,5,\binom{5}{2}a+4)\geq a^9(a+1)^6.$ In conclusion, we have  $ex_\Pi (6,5,\binom{5}{2}a+4)=a^9(a+1)^6.$
   \par
   Let
$G\in \mathcal{T}_{2,2}(a,6)$.
When $a=3,4$, we have $$ 6\in \left(1,\frac{\ln(1-\frac{3}{a+1})}{\ln(1-\frac{1}{a+1})}+2\right],$$ which means the quantity $P(G)$ is maximised when $V_0=\left\{1\right\} $ and $V_1=\left\{2,3,4,5,6\right\}$ by lemma \ref{MS}. Then $\mathcal{T}_{2,2}(a,6)=(a+1)^5a^{10}$ when $a=3,4$.

When $a\geq 5$, we have
$$6\in \left(\frac{\ln(1-\frac{3}{a+1})}{\ln(1-\frac{1}{a+1})}+2,
2\frac{\ln(1-\frac{3}{a+1})}{\ln(1-\frac{1}{a+1})}+3\right]
,$$ which means the quantity $P(G)$ is maximised when $V_0=\left\{1,2\right\}$ and $V_1=\left\{3,4,5,6\right\}$ by lemma \ref{MS}. Then  $\mathcal{T}_{2,2}(a,6)=(a-2)(a+1)^8a^6$ when $a\geq  5$.
So we have that
$$\Pi_{2,2}(a,6)=
\begin{cases}
	(a+1)^5a^{10}, a=3,4;\\
	(a-2)(a+1)^8a^6,a\geq 5.\\
\end{cases}$$
Comparing $\Pi_{2,2}(a,6)$ and $a^9(a+1)^6$, we find that $(a+1)^5a^{10}$ is strictly less than $a^9(a+1)^6$ when $a=3,4$, and $(a-2)(a+1)^8a^6$ is strictly less than $a^9(a+1)^6$ when $a\geq 5$. Hence
$$ex_\Pi (6,5,\binom{5}{2}a+4)=a^9(a+1)^6>\Pi_{2,2}(a,6),~~\text{when}~~a\geq 3.$$
\par
\subsection{The case when \bf  $n=5$ }
   For $n=5$, let $G$ be a product-extremal graph from $\mathcal{F}(5,5,\binom{5}{2}a+4)$. Then $e(G)\leq \binom{5}{2}a+4=10a+4$. By Lemma \ref{gg} (i), we have
         $P(G)\leq a^{(10-4)}(a+1)^4=a^6(a+1)^4$. Hence, $ex_\Pi (5,5,\binom{5}{2}a+4)\leq a^6(a+1)^4$.

        On the other hand, partitioning [5] into $V_0=\left\{1\right\} $ and $V_1=\left\{2,3,4,5\right\}$, we have that
        $$ (a+1)^4a^6 \leq \Pi_{2,2}(a,5) \leq ex_\Pi (5,5,\binom{5}{2}a+4)\leq a^6(a+1)^4.$$ Therefore, we have proved the case (iv) in Theorem \ref{MS1}.\\

\noindent {\bf Acknowledgments.}
Ran Gu was partially supported by
 National Natural Science Foundation of China (No. 11701143).

   \end{document}